\newcommand{\Z}{{\mathbb Z}}
\newcommand{\Q}{{\mathbb Q}}
\newcommand{\fa}{{\mathfrak a}}
\newcommand{\fp}{{\mathfrak p}}
\newcommand{\fq}{{\mathfrak q}}
\newcommand{\cO}{{\mathcal O}}
\newcommand{\Cl}{\operatorname{Cl}}
\newcommand{\Gal}{\operatorname{Gal}}
\newcommand{\eps}{\varepsilon}
\newtheorem{lem}{Lemma}[section]
\newtheorem{thm}[lem]{Theorem}
\newtheorem{prop}[lem]{Proposition}
\newtheorem{cor}[lem]{Corollary}
\title{Kronecker-Weber via Stickelberger}
\author{Franz Lemmermeyer}
\address{M\"orikeweg 1, 73489 Jagstzell}
\email{hb3@ix.urz.uni-heidelberg.de}
\begin{document}

\maketitle

\begin{abstract}
Nous donnons une nouvelle d\'emonstration du th\'eor\`eme de Kronecker
et Weber fond\'ee sur la th\'eorie de Kummer et le th\'eor\`eme 
de Stickelberger.

In this note we give a new proof of the theorem of Kronecker-Weber
based on Kummer theory and Stickelberger's theorem.
\end{abstract}

\section*{Introduction}

The theorem of Kronecker-Weber states that every abelian extension
of $\Q$ is cyclotomic, i.e., contained in some cyclotomic field. 
The most common proof found in textbooks is based on proofs given
by Hilbert \cite{Hil} and Speiser \cite{Spei}; a routine argument 
shows that it is sufficient to consider cyclic extensions  of 
prime power degree $p^m$ unramified outside $p$, and this special 
case is then proved by a somewhat technical calculation of 
differents using higher ramification groups and an application of 
Minkowski's theorem, according to which every extension of $\Q$ is 
ramified. In the proof below, this not very intuitive part is
replaced by a straightforward argument using Kummer theory
and Stickelberger's theorem.

In this note, $\zeta_m$ denotes a primitive $m$-th root of unity,
and ``unramified'' always means unramified at all finite primes. 
Moreover, we say that a normal extension $K/F$ 
\begin{itemize}
\item is of type $(p^a,p^b)$ if 
      $\Gal(K/F) \simeq (\Z/p^a\Z) \times (\Z/p^b\Z)$;
\item has exponent $m$ if $\Gal(K/F)$ has exponent $m$.
\end{itemize}

\section{The Reduction}

In this section we will show that it is sufficient to prove
the following special case of the Kronecker-Weber theorem
(it seems that the reduction to extensions of prime degree 
is due to Steinbacher \cite{Stein}):

\begin{prop}\label{PExp}
The maximal abelian extension of exponent $p$ that is 
unramified outside $p$ is cyclic: it is the subfield of 
degree $p$ of $\Q(\zeta_{p^2})$. 
\end{prop}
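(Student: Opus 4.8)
The plan is to adjoin $\zeta_p$ and translate the statement into Kummer theory over $K:=\Q(\zeta_p)$, where Stickelberger's theorem controls the only piece of the class group that can intervene. I will take $p$ odd (for $p=2$ the maximal field in question is $\Q(\zeta_8)$, already cyclotomic). Since the degree-$p$ subfield $F$ of $\Q(\zeta_{p^2})$ is an abelian extension of $\Q$ of exponent $p$ unramified outside $p$, it suffices to prove that every finite such extension $L/\Q$ satisfies $[L:\Q]\le p$; then $L=F$, which is cyclic. I would put $\Delta:=\Gal(K/\Q)$ and $M:=LK$: as $[L:\Q]$ is a power of $p$ while $[K:\Q]=p-1$, the fields $L$ and $K$ are linearly disjoint over $\Q$, so $M/\Q$ is abelian, unramified outside $p$, and restriction gives $\Gal(M/K)\cong\Gal(L/\Q)$, whence $[L:\Q]=[M:K]$.

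Next I would apply Kummer theory: $M=K(\sqrt[p]{W})$ for a subgroup $W\subseteq K^\times/(K^\times)^p$ with $[M:K]=p^{\dim_{\F_p}W}$. The decisive observation concerns the action of $\Delta$ on $W$: since $M/\Q$ is abelian, $\Delta$ acts trivially on $\Gal(M/K)$, and because the Kummer pairing $\Gal(M/K)\times W\to\mu_p$ is $\Delta$-equivariant with $\Delta$ acting on $\mu_p$ through the cyclotomic character $\omega$, it follows that $W$ lies in the $\omega$-eigenspace of $K^\times/(K^\times)^p$. The unramified-outside-$p$ hypothesis means exactly that $W\subseteq W_S$, where $\fp=(1-\zeta_p)$ is the unique prime above $p$, $S=\{\fp\}$, and $W_S=\{[a]:v_\fq(a)\equiv0\pmod p\text{ for all }\fq\nmid p\}$; since $\fp$ is principal there is the standard exact sequence of $\F_p[\Delta]$-modules
\[
0\lra\cO_{K,S}^\times/(\cO_{K,S}^\times)^p\lra W_S\lra\Cl(K)[p]\lra0 ,
\]
and because $p\nmid|\Delta|$ the $\omega$-eigenspace functor is exact. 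So everything reduces to computing $W_S^{(\omega)}$. The unit term is elementary: by Dirichlet's unit theorem the torsion-free part of $\cO_{K,S}^\times$ (which contains $1-\zeta_p$, fixed modulo units by $\Delta$) affords only the even characters of $\Delta$, while the torsion $\mu_{2p}$ contributes precisely $\mu_p$, the $\omega$-line; hence $(\cO_{K,S}^\times/(\cO_{K,S}^\times)^p)^{(\omega)}$ is one-dimensional, spanned by $[\zeta_p]$ (which corresponds to $K(\sqrt[p]{\zeta_p})=\Q(\zeta_{p^2})$).

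It therefore remains to show $(\Cl(K)[p])^{(\omega)}=0$, and this is where Stickelberger's theorem is the whole point. The Stickelberger element $\theta$ acts on the $\omega$-eigenspace of $\Cl(K)\otimes\Z_p$ through the generalized Bernoulli number $B_{1,\omega^{-1}}$, whose $p$-adic valuation is $-1$: indeed $\sum_{a=1}^{p-1}a\,\omega^{-1}(a)\equiv\sum_a a\cdot a^{-1}\equiv-1\pmod p$ (with $a^{-1}$ the inverse of $a$ mod $p$), so that numerator is a $p$-adic unit. The integral annihilators $(c-\sigma_c)\theta\in\Z[\Delta]$ of $\Cl(K)$ then act on that eigenspace through $(c-\omega(c))\,B_{1,\omega^{-1}}$, of valuation $\ge1+(-1)=0$, equal to $0$ whenever $v_p(c-\omega(c))=1$ — and such $c$ exist, since not every residue is a Teichmüller representative mod $p^2$. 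A $p$-adic unit annihilating $(\Cl(K)\otimes\Z_p)^{(\omega)}$ forces it to vanish, so $W_S^{(\omega)}=\la[\zeta_p]\ra$, $W\subseteq\la[\zeta_p]\ra$, and $[L:\Q]=[M:K]\le p$; with $F\subseteq L$ this gives $L=F$, the cyclic degree-$p$ subfield of $\Q(\zeta_{p^2})$. The hard part is exactly this last vanishing, and it is delicate for two reasons: one must get the twist right — it is the eigenspace $\omega$, the eigenspace of $\mu_p$ itself, that enters — and $\omega$ is precisely the eigenspace on which the naive Stickelberger annihilators degenerate ($c-\omega(c)\equiv0$), so the argument survives only because of the compensating pole of $B_{1,\omega^{-1}}$; identifying that valuation correctly is the crux. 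Everything else is routine, apart from checking that $p=2$ is genuinely excluded (there the maximal field is $\Q(\zeta_8)$, which is not cyclic) and that $p=3$ is trivial, $K$ being imaginary quadratic of class number one.
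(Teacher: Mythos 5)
Your proof is correct and follows essentially the same route as the paper's: pass to $\Q(\zeta_p)$, use Kummer theory, observe that the abelian condition over $\Q$ forces the Kummer radical into the $\omega$-eigenspace, kill the class-group obstruction with Stickelberger via the computation $\sum_a a\,\omega^{-1}(a)\equiv -1 \pmod p$, and dispose of the units by parity. The only differences are presentational — you treat the whole exponent-$p$ Kummer group at once with the $S$-unit/class-group exact sequence and phrase the Stickelberger step through $B_{1,\omega^{-1}}$ and the annihilators $(c-\sigma_c)\theta$, where the paper works with one cyclic degree-$p$ piece at a time and uses the integral element $\sum_a a\sigma_a^{-1}$ acting as $-1$ directly.
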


The corresponding result for the prime $p=2$ is easily proved:
 
\begin{prop}\label{PEx2}
The maximal real abelian $2$-extension of $\Q$ with exponent $2$ 
and unramified outside $2$ is cyclic: it is the subfield 
$\Q(\sqrt{2}\,)$ of $\Q(\zeta_8)$.
\end{prop}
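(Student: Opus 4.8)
The plan is to identify the maximal real abelian $2$-extension $K/\Q$ of exponent $2$ unramified outside $2$ by elementary means. Since $\Gal(K/\Q)$ is an elementary abelian $2$-group, $K$ is a compositum of quadratic fields, and by Kummer theory $K = \Q(\sqrt{d_1},\dots,\sqrt{d_r})$ for squarefree integers $d_i$. First I would reduce to understanding which quadratic fields $\Q(\sqrt{d}\,)$ are admissible: reality forces $d > 0$, and being unramified outside $2$ forces $\disc \Q(\sqrt{d}\,)$ to be a power of $2$, hence $d \in \{-1, 2, -2\}$ up to squares by the standard discriminant formula ($\disc = d$ if $d \equiv 1 \bmod 4$, else $4d$); combining with $d > 0$ leaves only $d = 2$. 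Indeed $\Q(\sqrt 2\,)$ has discriminant $8$, so it is real and unramified outside $2$.

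Next I would argue that no larger field can occur. Suppose $K$ is real abelian of exponent $2$ and unramified outside $2$; then every quadratic subfield of $K$ is of the same type, hence equals $\Q(\sqrt 2\,)$ by the previous paragraph. A group-theoretic observation finishes it: an elementary abelian $2$-group with exactly one subgroup of index $2$ has order at most $2$, so $[K:\Q] \le 2$, forcing $K = \Q(\sqrt 2\,)$, which is the real quadratic subfield of $\Q(\zeta_8)$ since $\sqrt 2 = \zeta_8 + \zeta_8^{-1}$.

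The only genuinely delicate point is the ramification bookkeeping at $2$: one must be careful that ``unramified outside $2$'' allows wild ramification at $2$ but nothing at odd primes, and that the discriminant of $\Q(\sqrt d\,)$ is divisible by an odd prime $\ell$ precisely when $\ell \mid d$. This is classical, so I would simply cite the formula for the discriminant of a quadratic field. Everything else is routine Kummer theory plus the trivial group-theoretic remark, so I expect no real obstacle; the proposition is, as the text says, ``easily proved.''
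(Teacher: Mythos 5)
Your proposal is correct and follows essentially the same route as the paper, which simply observes that the only quadratic fields unramified outside $2$ are $\Q(i)$, $\Q(\sqrt{-2}\,)$, and $\Q(\sqrt{2}\,)$, of which only the last is real; the remaining reduction (a real exponent-$2$ extension is a compositum of real quadratic subfields, all of which must then equal $\Q(\sqrt{2}\,)$) is exactly what the paper leaves implicit. Your extra care with the discriminant formula and the counting of index-$2$ subgroups just makes explicit what the paper treats as routine.
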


\begin{proof}
The only quadratic extensions of $\Q$ that are unramified outside $2$ 
are  $\Q(i)$, $\Q(\sqrt{-2}\,)$, and $\Q(\sqrt{2}\,)$. 
\end{proof}

The following simple observation will be used repeatedly below:

\begin{lem}\label{LC}
If the compositum of two cyclic $p$-extensions $K$, $K'$ 
is cyclic, then $K \subseteq K'$ or $K' \subseteq K$.
\end{lem}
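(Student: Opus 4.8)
The plan is to reduce the statement to a single elementary fact about cyclic $p$-groups, namely that such a group possesses a unique subgroup of each order dividing its order, so that its lattice of subgroups is totally ordered by inclusion. Granting this, the lemma becomes an immediate consequence of the fundamental theorem of Galois theory.

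In detail, I would first observe that, $K$ and $K'$ being cyclic (in particular normal and separable) extensions of their common ground field $F$, the compositum $KK'/F$ is again Galois, and its degree divides $[K:F]\,[K':F]$; hence $G := \Gal(KK'/F)$ is a finite $p$-group, which by hypothesis is cyclic. Next I would apply the structure fact above to the two subgroups $H := \Gal(KK'/K)$ and $H' := \Gal(KK'/K')$ of $G$: lying in a cyclic $p$-group, they are comparable, so $H \subseteq H'$ or $H' \subseteq H$. Passing back through the Galois correspondence, which reverses inclusions, the first case yields $K' \subseteq K$ and the second yields $K \subseteq K'$, which is exactly the assertion.

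There is no serious obstacle here; the only points that call for a modicum of care are bookkeeping ones — verifying that $KK'/F$ is genuinely Galois so that the correspondence applies, and keeping the inclusion-reversal straight when translating between subgroups of $G$ and intermediate fields. The group-theoretic input (uniqueness of the subgroup of a given order in a cyclic group) is entirely standard and may be quoted without proof.
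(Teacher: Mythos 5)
Your proof is correct and complete; the paper itself gives no proof of this lemma, presenting it as a ``simple observation,'' and your argument --- passing to the Galois group of the compositum and using that the subgroups of a cyclic $p$-group form a chain under inclusion --- is exactly the standard argument the author intends. The one point that genuinely requires care, and which you handle correctly, is that the subgroup lattice of a cyclic group is totally ordered only when the order is a prime power, so the $p$-group hypothesis is essential and not mere decoration.
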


Now we show that Prop. \ref{PExp} implies the corresponding 
result for extensions of prime power degree:

\begin{prop}\label{PCp}
Let $K/\Q$ be a cyclic extension of odd prime power degree $p^m$ 
and unramified outside $p$. Then $K$ is cyclotomic.
\end{prop}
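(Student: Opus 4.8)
The plan is to induct on $m$, proving the sharper statement that $K=\Q_m$, where $\Q_m$ denotes the unique subfield of $\Q(\zeta_{p^{m+1}})$ with $[\Q_m:\Q]=p^m$. This $\Q_m$ makes sense and is cyclic over $\Q$ and unramified outside $p$, because for odd $p$ one has $\Gal(\Q(\zeta_{p^{m+1}})/\Q)\simeq(\Z/p^m\Z)\times(\Z/(p-1)\Z)$, so its $p$-part is cyclic and the fields $\Q\subset\Q_1\subset\Q_2\subset\cdots$ form a tower; in particular $\Q_{m-1}\subseteq\Q_m$. The base case $m=1$ is immediate: a cyclic extension of degree $p$ unramified outside $p$ is abelian of exponent $p$ and unramified outside $p$, hence contained in the maximal such field, which by Proposition \ref{PExp} is $\Q_1$; comparing degrees gives $K=\Q_1$.

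For the inductive step, let $m\geq 2$ and assume the claim for $m-1$. Let $K'$ be the subfield of $K$ with $[K':\Q]=p^{m-1}$ (unique since $\Gal(K/\Q)$ is cyclic). Then $K'/\Q$ is cyclic of degree $p^{m-1}$ and unramified outside $p$, so by the induction hypothesis $K'=\Q_{m-1}$, and hence $\Q_{m-1}\subseteq K\cap\Q_m$. Since $K\cap\Q_m$ is a subfield of the cyclic extension $K/\Q$, it equals either $K$ — in which case $K\subseteq\Q_m$ and so $K=\Q_m$ on comparing degrees — or $\Q_{m-1}$, which I now assume. Then the compositum $M=K\Q_m$ has $[M:\Q]=p^m\cdot p^m/p^{m-1}=p^{m+1}$, and $M/\Q$ is abelian (a compositum of abelian extensions embeds into the product of their Galois groups) and unramified outside $p$ (a compositum of extensions unramified outside $p$).

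Now $G=\Gal(M/\Q)$ is an abelian group of order $p^{m+1}$ admitting $\Gal(K/\Q)\simeq\Z/p^m\Z$ as a quotient, so $G\simeq\Z/p^{m+1}\Z$ or $G\simeq(\Z/p^m\Z)\times(\Z/p\Z)$. In the first case $M/\Q$ is cyclic and contains the two cyclic $p$-extensions $K$ and $\Q_m$ of $\Q$, so Lemma \ref{LC} forces $K\subseteq\Q_m$ or $\Q_m\subseteq K$, whence $K=\Q_m$. In the second case $G/pG\simeq(\Z/p\Z)\times(\Z/p\Z)$, so $M$ contains a subextension of $\Q$ of type $(p,p)$, abelian of exponent $p$ and unramified outside $p$, contradicting Proposition \ref{PExp}, which says such a field has degree $p$. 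Hence $K=\Q_m\subseteq\Q(\zeta_{p^{m+1}})$, completing the induction, and a fortiori $K$ is cyclotomic. The only delicate point is the bookkeeping that pins $\lvert G\rvert$ to exactly $p^{m+1}$ — namely the use of the induction hypothesis to identify $K\cap\Q_m$ with $\Q_{m-1}$; once that is in place, the dichotomy for abelian $p$-groups together with Lemma \ref{LC} and Proposition \ref{PExp} finishes the argument with no further computation.
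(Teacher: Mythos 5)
Your proof is correct and is essentially the paper's argument: form the compositum of $K$ with the degree-$p^m$ subfield of $\Q(\zeta_{p^{m+1}})$, rule out the non-cyclic case by extracting a subfield of type $(p,p)$ unramified outside $p$ that contradicts Proposition \ref{PExp}, and finish with Lemma \ref{LC}. The induction and the exact computation of $[M:\Q]$ are superfluous, since any non-cyclic finite abelian $p$-group already has a quotient of type $(p,p)$; the paper dispenses with both.
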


\begin{proof}
Let $K'$ be the subfield of degree $p^m$ in $\Q(\zeta_{p^{m+1}})$. 
If $K'K$ is not cyclic, then it contains a subfield of type $(p,p)$ 
unramified outside $p$, which contradicts Prop. \ref{PExp}. Thus
$K'K$ is cyclic, and Lemma \ref{LC} implies that $K = K'$.
\end{proof}

Next we prove the analog for $p=2$:

\begin{prop}\label{PC2}
Let $K/\Q$ be a cyclic extension of degree $2^m$ and unramified 
outside $2$. Then $K$ is cyclotomic.
\end{prop}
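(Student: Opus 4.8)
The strategy is to dodge the failure at $p=2$ of the literal analogue of Proposition~\ref{PExp} (the maximal abelian extension of exponent $2$ unramified outside $2$ is $\Q(\zeta_8)$, which is of type $(2,2)$) by adjoining $i$ and reducing everything to \emph{real} extensions, where Proposition~\ref{PEx2} is available. The preliminary I would record is a real-case lemma playing the role that Proposition~\ref{PExp} plays for odd $p$: \emph{every real cyclic extension $F/\Q$ of degree $2^d$ that is unramified outside $2$ equals $\Q(\zeta_{2^{d+2}})^+$}. Indeed $\Q(\zeta_{2^{d+2}})^+$ is such a field, since $\Gal(\Q(\zeta_{2^{d+2}})/\Q)\simeq(\Z/2^{d+2}\Z)^\times=\langle-1\rangle\times\langle 5\rangle$, so the subfield fixed by complex conjugation is cyclic of degree $2^d$ and unramified outside $2$; and if $F_1,F_2$ are two such fields, their compositum $E=F_1F_2$ is real, abelian over $\Q$, and unramified outside $2$, so were $\Gal(E/\Q)$ noncyclic it would admit a quotient $\simeq(\Z/2\Z)^2$, whence $E$ would contain a real subfield of type $(2,2)$ unramified outside $2$, contradicting Proposition~\ref{PEx2}; thus $\Gal(E/\Q)$ is cyclic, Lemma~\ref{LC} makes one of $F_1,F_2$ contain the other, and comparing degrees gives $F_1=F_2$.

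Granting this, let $K/\Q$ be cyclic of degree $2^m$ and unramified outside $2$, and put $L=K(i)$. Then $L/\Q$ is abelian and unramified outside $2$, and $L$ is not real since $i\in L$. Restriction to $K$ and to $\Q(i)$ embeds $\Gal(L/\Q)$ into $\Gal(K/\Q)\times\Gal(\Q(i)/\Q)\simeq(\Z/2^m\Z)\times(\Z/2\Z)$, and complex conjugation $c\in\Gal(L/\Q)$ acts nontrivially on $\Q(i)$, so its image has nonzero second coordinate; in particular $c$ has order $2$, the second projection restricted to $\Gal(L/\Q)$ is onto $\Z/2\Z$, and its kernel $G_1$, being a subgroup of $\Z/2^m\Z$, is cyclic of index $2$. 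Since $c\notin G_1$, the composite $G_1\hra\Gal(L/\Q)\twoheadrightarrow\Gal(L/\Q)/\langle c\rangle$ is an isomorphism, so the maximal real subfield $L^+=L^{\langle c\rangle}$ is a real cyclic extension of $\Q$ of $2$-power degree, unramified outside $2$. If $[L^+:\Q]=1$ then $L=\Q(i)$; otherwise the real-case lemma gives $L^+=\Q(\zeta_{2^{d+2}})^+$ with $2^d=[L^+:\Q]\geq 2$, and then $L=L^+(i)=\Q(\zeta_{2^{d+2}})$ — the first equality because $[L:L^+]=2$ while $i\notin L^+$, the second by a count of degrees. In either case $L$ is cyclotomic, hence so is $K\subseteq L$.

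The step I expect to need the most care is this group-theoretic core: it is exactly because complex conjugation restricts nontrivially to $\Q(i)$ that $G_1\cap\langle c\rangle=1$, and this is what rescues the cyclicity of $L^+$ — thereby trading the obstruction (the stray quadratic field $\Q(i)$, responsible for the exponent-$2$ case being $(2,2)$ rather than cyclic) for the harmless enlargement $K\subseteq K(i)$. The remaining points — that $L$ is unramified outside $2$, that $\Q(\zeta_{2^{d+2}})^+$ has the asserted Galois group, and the degree count $[\Q(\zeta_{2^{d+2}})^+(i):\Q]=2^{d+1}=[\Q(\zeta_{2^{d+2}}):\Q]$ — are routine.
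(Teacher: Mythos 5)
Your proof is correct and follows essentially the same route as the paper: adjoin $i$, pass to the maximal real subfield to make Proposition~\ref{PEx2} applicable, and pin that subfield down as $\Q(\zeta_{2^{d+2}})^+$ by showing the compositum with the cyclotomic candidate would otherwise contain a real $(2,2)$-subfield, then apply Lemma~\ref{LC}. The only difference is organizational — you package the comparison as a standalone uniqueness lemma for real cyclic $2$-extensions unramified outside $2$ and recover $L=L^+(i)$ at the end, whereas the paper first replaces $K$ by the real field $M=K(i)^+$ and then compares $M$ directly with $\Q(\zeta_{2^{m+2}})^+$.
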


\begin{proof}
If $m = 1$ we are done by Prop. \ref{PEx2}. 
If $m \ge 2$, assume first that $K$ is nonreal. Then
$K(i)/K$ is a quadratic extension, and its maximal
real subfield $M$ is cyclic of degree $2^m$ by Prop. \ref{PEx2}. 
Since $K/\Q$ is cyclotomic if and only if $M$ is, we may assume
that $K$ is totally real.

Now let $K'$ be the the maximal real subfield of  
$\Q(\zeta_{2^{m+2}})$. If $K'K$ is not cyclic, then it contains
three real quadratic fields unramified outside $2$, which
contradicts Prop. \ref{PEx2}. Thus $K'K$ is cyclic, and
Lemma \ref{LC} implies that $K = K'$.
\end{proof}

Now the theorem of Kronecker-Weber follows:
first observe that abelian groups are direct products of cyclic
groups of prime power order; this shows that it is sufficient
to consider cyclic extensions of prime power degree $p^m$. If 
$K/\Q$ is such an extension, and if $q \ne p$ is ramified in 
$K/\Q$, then there exists a cyclic cyclotomic extension $L/\Q$
with the property that $KL = FL$ for some cyclic extension $F/\Q$ 
of prime power degree in which $q$ is unramified. Since $K$ is 
cyclotomic if and only if $F$ is, we see that after finitely
many steps we have reduced Kronecker-Weber to showing that
cyclic extensions of degree $p^m$ unramified outside $p$
are cyclotomic. But this is the content of Prop. \ref{PCp}
and \ref{PC2}.
 
Since this argument can be found in all the proofs based on 
the Hilbert-Speiser approach (see e.g. Greenberg \cite{Gre}
or Marcus \cite{Marc}), we need not repeat the details here.

\section{Proof of Proposition \ref{PExp}}

Let $K/\Q$ be a cyclic extension of prime degree $p$ and 
unramified outside $p$. We will now use Kummer theory to 
show that it is cyclotomic. For the rest of this article,
set $F = \Q(\zeta_p)$ and define $\sigma_a \in G = \Gal(F/\Q)$
by $\sigma_a(\zeta_p) = \zeta_p^a$ for $1 \le a < p$.

\begin{lem}\label{LK1}
The Kummer extension  $L = F(\sqrt[{}^p\,]{\mu}\,)$ is abelian over 
$\Q$ if and only if for every $\sigma_a \in G$ there is a 
$\xi \in F^\times$ such that $ \sigma_a(\mu) = \xi^p \mu^a$.
\end{lem}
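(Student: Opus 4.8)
The plan is to characterize when the Kummer extension $L = F(\sqrt[p]{\mu})$ is abelian over $\Q$ by analyzing the action of $\Gal(L/\Q)$ on the Kummer generator. Since $L/F$ is cyclic of degree $p$ (or trivial) and $F/\Q$ is abelian, $L/\Q$ is automatically normal once $L$ is normal over $\Q$; the question is whether $\Gal(L/\Q)$ is abelian. First I would fix a generator $\tau$ of $\Gal(L/F)$, determined by $\tau(\sqrt[p]{\mu}) = \zeta_p \sqrt[p]{\mu}$, and for each $\sigma_a \in G$ choose a lift $\tilde\sigma_a \in \Gal(L/\Q)$. The extension $L/\Q$ is normal iff $\sigma_a(\mu)$ differs from a power of $\mu$ by a $p$-th power in $F^\times$ for all $a$, i.e. $\sigma_a(\mu) \equiv \mu^{c(a)} \bmod F^{\times p}$ for some exponent $c(a)$; this is the standard Kummer-theoretic normality criterion, which I would invoke or prove in a line using that $L/\Q$ normal $\iff$ $\sigma_a(L) = L$ $\iff$ $\sigma_a(\mu) \in \mu^{\Z} \cdot F^{\times p}$.

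**Next** I would compute the commutator $[\tilde\sigma_a, \tau]$ to pin down when $\Gal(L/\Q)$ is abelian. Writing $\sigma_a(\mu) = \xi^p \mu^{c(a)}$, one gets $\tilde\sigma_a \tau \tilde\sigma_a^{-1}(\sqrt[p]{\mu}) = \zeta_p^{?}\sqrt[p]{\mu}$, where the exponent involves both $c(a)$ and the fact that $\tilde\sigma_a$ acts on $\zeta_p$ by $\zeta_p \mapsto \zeta_p^a$. A short calculation shows $\tilde\sigma_a \tau \tilde\sigma_a^{-1} = \tau^{a c(a)^{-1}}$ (exponents read mod $p$), so $\Gal(L/\Q)$ is abelian iff $\tilde\sigma_a$ commutes with $\tau$ for all $a$, iff $a \equiv c(a) \bmod p$ for all $a$ — that is, iff $\sigma_a(\mu) = \xi^p \mu^a$ for a suitable $\xi \in F^\times$, which is exactly the claimed condition. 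I would then note the converse direction is immediate: if such $\xi$ exists for every $a$, then in particular $\sigma_a(L) = L$ so $L/\Q$ is normal, and the commutator computation shows the Galois group is abelian.

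**The main obstacle** I anticipate is bookkeeping with the two independent "exponents" in play: the automorphism index $a$ acting on roots of unity, and the Kummer exponent $c(a)$ describing the image of $\mu$ — plus the need to work consistently modulo $p$-th powers in $F^\times$ and modulo $p$ in the exponents. One must be careful that the choice of lift $\tilde\sigma_a$ and of the $p$-th root $\sqrt[p]{\mu}$ does not affect the commutator (it does not, since changing either multiplies $\sqrt[p]{\mu}$ by a root of unity, which is central in $\Gal(L/\Q(\zeta_p))$-computations). I would also handle the degenerate case $\mu \in F^{\times p}$ (so $L = F$) separately, where the statement holds vacuously. Otherwise the argument is a direct, if slightly fiddly, unwinding of definitions, and no deep input beyond Kummer theory is needed.
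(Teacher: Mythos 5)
The paper does not actually prove this lemma; it cites Hilbert (Satz 147) and Washington (Lemma 14.7) for ``the simple proof.'' Your argument is essentially the standard one found in those references: reduce to the normality criterion $\sigma_a(\mu)\equiv\mu^{c(a)}\bmod F^{\times p}$, then compute the conjugation action of a lift $\tilde\sigma_a$ on the generator $\tau$ of $\Gal(L/F)$. Your computation is correct: writing $\tilde\sigma_a(\sqrt[p]{\mu})=\zeta_p^j\xi\,\mu^{c(a)/p}$ one gets $\tilde\sigma_a\tau\tilde\sigma_a^{-1}=\tau^{a\,c(a)^{-1}}$, so $\tau$ is central precisely when $c(a)\equiv a\bmod p$ for all $a$, and absorbing the discrepancy $\mu^{kp}$ into $\xi^p$ recovers the exact form $\sigma_a(\mu)=\xi^p\mu^a$. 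Your remarks on the independence of the choices of lift and of $p$-th root, and on the degenerate case $\mu\in F^{\times p}$, are also fine.

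There is one step you assert without justification and that is not quite automatic: ``$\Gal(L/\Q)$ is abelian iff $\tilde\sigma_a$ commutes with $\tau$ for all $a$.'' The ``only if'' direction is trivial, but for the ``if'' direction you have only shown that $H=\Gal(L/F)$ is central in $G'=\Gal(L/\Q)$; you still owe an argument that the lifts $\tilde\sigma_a$ commute \emph{with one another}. Their commutators $[\tilde\sigma_a,\tilde\sigma_b]$ a priori only land in $H$. The gap is easy to close, but it genuinely uses that $G'/H\simeq(\Z/p\Z)^\times$ is cyclic: once $H\subseteq Z(G')$, the quotient $G'/Z(G')$ is cyclic, and a group with cyclic central quotient is abelian. (Alternatively: since $\ggT(p,p-1)=1$ the extension $1\to H\to G'\to G\to 1$ splits, so the lifts may be chosen to form a subgroup isomorphic to the abelian group $G$, and then $G'=H\times G$.) With that sentence added, your proof is complete and is in substance the proof the paper is pointing to.
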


For the simple proof, see e.g. Hilbert \cite[Satz 147]{HZB} or
Washington \cite[Lemma 14.7]{Wash}.

Let $K/\Q$ be a cyclic extension of prime degree $p$ and 
unramified outside $p$. Put $F = \Q(\zeta_p)$ and $L = KF$;
then $L = F(\sqrt[{}^p\,]{\mu}\,)$ for some nonzero $\mu \in \cO_F$,
and $L/F$ is unramified outside $p$. 
 
\begin{lem}
Let $\fq$ be a prime ideal in $F$ with $(\mu) = \fq^r\fa$, 
$\fq \nmid \fa$; if $p \nmid r$ and $L/\Q$ is abelian, then 
$\fq$ splits completely in $F/\Q$.
\end{lem}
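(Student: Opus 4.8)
The plan is to read off the splitting behaviour of $\fq$ directly from the Kummer data by comparing valuations modulo $p$. Since $L/\Q$ is abelian, Lemma \ref{LK1} supplies, for each $\sigma_a \in G$, an element $\xi_a \in F^\times$ with $\sigma_a(\mu) = \xi_a^p\,\mu^a$. The point is that passing to $\fP$-adic valuations kills the $p$-th power $\xi_a^p$ modulo $p$ and leaves a clean congruence relating the valuation of $\mu$ at $\fq$ to its valuation at the conjugate prime $\sigma_a^{-1}(\fq)$.

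First I would fix an arbitrary prime $\fP$ of $F$ and apply $v_\fP$ to the relation $\sigma_a(\mu) = \xi_a^p\,\mu^a$. Using the standard identity $v_\fP(\sigma_a(x)) = v_{\sigma_a^{-1}(\fP)}(x)$ for the action of $G$ on valuations, this reads $v_{\sigma_a^{-1}(\fP)}(\mu) = p\,v_\fP(\xi_a) + a\,v_\fP(\mu)$. Reducing modulo $p$ removes the first term and yields $v_{\sigma_a^{-1}(\fP)}(\mu) \equiv a\,v_\fP(\mu) \pmod p$. Specializing to $\fP = \fq$ and writing $r = v_\fq(\mu)$ produces the key congruence $v_{\sigma_a^{-1}(\fq)}(\mu) \equiv a\,r \pmod p$.

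Next I would examine the decomposition group of $\fq$ in $\Gal(F/\Q)$. Suppose $\sigma_a$ fixes $\fq$, that is $\sigma_a^{-1}(\fq) = \fq$. Then the key congruence becomes $r \equiv a\,r \pmod p$, i.e. $(a-1)r \equiv 0 \pmod p$. Here the hypothesis $p \nmid r$ is exactly what is needed: it forces $a \equiv 1 \pmod p$, and since $1 \le a < p$ this gives $a = 1$. Hence the decomposition group of $\fq$ is trivial, and therefore $\fq$ splits completely in $F/\Q$.

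The argument is essentially careful bookkeeping of the Galois action on valuations, so I expect no serious obstacle; the one point to get right is the direction of the action, $v_\fP(\sigma_a(x)) = v_{\sigma_a^{-1}(\fP)}(x)$, together with the observation that $p \nmid r$ is the indispensable hypothesis, since it is what permits cancelling $r$ to conclude $a = 1$. As a bonus, the same congruence shows $\fq$ cannot lie over $p$: the unique prime above $p$ is fixed by all of $G$, so taking any $a \ne 1$ would give $(a-1)r \equiv 0 \pmod p$ and hence $p \mid r$, contrary to hypothesis.
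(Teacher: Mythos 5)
Your proof is correct and follows essentially the same route as the paper: both arguments take $\sigma_a$ in the decomposition group of $\fq$, invoke the abelian criterion $\sigma_a(\mu) = \xi^p\mu^a$ of Lemma \ref{LK1}, and compare valuations at $\fq$ to obtain $r \equiv ar \pmod{p}$, so that $p \nmid r$ forces $a = 1$ and the decomposition group is trivial. Your closing observation that $\fq$ cannot lie above $p$ is precisely the remark the paper makes immediately after the lemma, namely that $(1-\zeta) \nmid \mu$.
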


\begin{proof}
Let $\sigma$ be an element of the decomposition group 
$Z(\fq|q)$ of $\fq$. Since $L/\Q$ is abelian, we must have
$\sigma_a(\mu) = \xi^p \mu^a$. 
Now $\sigma_a(\fq) = \fq$ implies $\fq^r \parallel \xi^p \mu^a$,
and this implies $r \equiv ar \bmod p$; but $p \nmid r$ show 
that this is possible only if $a = 1$. Thus $\sigma_a = 1$, and 
$\fq$ splits completely in $F/\Q$.
\end{proof}

In particular, we find that $(1-\zeta) \nmid \mu$. Since
$L/F$ is unramified outside $p$, prime ideals $\fp \nmid p$
must satisfy $\fp^{bp} \parallel \mu$ for some integer $b$.
This shows that $(\mu) = \fa^p$ is the $p$-th power of some
ideal $\fa$. From $(\mu) = \fa^p$ and the fact that $L/\Q$ 
is abelian we deduce that $\sigma_a(\fa)^p = \fa^{pa}\xi^p$,
where $\sigma_a(\zeta_p) = \zeta_p^a$. Thus 
$\sigma_a(c) = c^a$ for the ideal class $c = [\fa]$ and for
every $a$ with $1 \le a < p$. Now we invoke Stickelberger's 
Theorem (cf. \cite{IR} or \cite[Chap. 11]{Lem}) to show
that $\fa$ is principal:

\begin{thm}
Let $F= \Q(\zeta_p)$; then the Stickelberger element 
$$\theta = \sum_{a=1}^{p-1} a \sigma_a^{-1} \in \Z[\Gal(F/\Q)]$$
annihilates the ideal class group $\Cl(F)$. 
\end{thm}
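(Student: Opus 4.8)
The plan is to prove Stickelberger's theorem via Gauss sums, following the classical route (e.g.\ Ireland--Rosen or \cite[Chap.\ 11]{Lem}). Fix a prime ideal $\fP$ of $\cO_F$ lying over a rational prime $q\neq p$, and let $\fp=\fP\cap\Z$. For $x\in\cO_F$ write $\chi(x)=\left(\frac{x}{\fP}\right)^{-1}$ for the $p$-th power residue character mod $\fP$ (with the usual convention $\chi(x)=0$ when $\fP\mid x$); this is a character of the multiplicative group of the residue field $\cO_F/\fP\cong\F_{q^f}$, where $f$ is the order of $q$ in $(\Z/p\Z)^\times$. Form the Gauss sum
\[
g(\chi)=\sum_{x\bmod\fP}\chi(x)\,\zeta_q^{\operatorname{Tr}(x)},
\]
the trace being from $\F_{q^f}$ to $\F_q$. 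The first basic fact is $|g(\chi)|^2=q^f$, so $g(\chi)\overline{g(\chi)}=q^f$ as algebraic integers in $\Q(\zeta_p,\zeta_q)$; more precisely $g(\chi)g(\bar\chi)=\chi(-1)q^f$ and $g(\bar\chi)=\chi(-1)\overline{g(\chi)}$.

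Next I would compute the prime ideal factorization of $g(\chi)$ in $\Q(\zeta_{pq})$. Because $|g(\chi)|^2=q^f$, only primes above $q$ can occur. The Galois group of $\Q(\zeta_{pq})/\Q(\zeta_q)$ is $G=\Gal(F/\Q)$ acting by $\sigma_t\colon\zeta_p\mapsto\zeta_p^t$, and one has the transformation rule $\sigma_t(g(\chi))=\chi^{-t}(\text{unit})\cdot$ — more usefully, $\sigma_t$ sends the Gauss sum for $\chi$ to that for $\chi^t$, and a short computation with the residue character gives that the exponent to which a prime above $q$ divides $g(\chi)$ is governed by the function $t\mapsto\langle t/p\rangle$ (fractional part). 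Concretely, writing $\wp$ for the prime of $\Q(\zeta_{pq})$ induced by $\fP$, one shows
\[
v_{\sigma_t\wp}\bigl(g(\chi)\bigr)=f\Big\langle \frac{t}{p}\Big\rangle,\qquad 1\le t\le p-1,
\]
and $g(\chi)$ is a unit at all primes not above $q$. This is the technical heart of the argument and the step I expect to be the main obstacle: establishing the local valuation formula requires either Stickelberger's congruence $g(\chi)\equiv -\frac{(-1)^{?}}{\cdots}$ modulo a prime above $q$ expressed via the $q$-adic expansion, or an inductive comparison using the Hasse--Davenport relation and the multiplicativity properties of Gauss sums. I would present it via the congruence $g(\chi)\equiv -1/\bigl(\prod\text{factorials}\bigr)\bmod\fP'$ route, since that keeps the bookkeeping to a minimum.

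Granting the valuation formula, the proof finishes quickly. It shows that as fractional ideals
\[
\bigl(g(\chi)\bigr)=\fP^{\,\theta'},\qquad \theta'=\sum_{t=1}^{p-1}t\,\sigma_t^{-1},
\]
where $\fP$ here denotes the prime of $F$ (one checks $f\langle t/p\rangle$ in $\Q(\zeta_{pq})$ corresponds to $t$ in $F$ after accounting for the ramification of $q$ in $F/\Q$, noting $q$ is unramified in $F$). Since $g(\chi)\in\Q(\zeta_{pq})$ is an actual element, the ideal $\fP^{\theta'}$ of $\cO_F$ becomes principal after extension to $\Q(\zeta_{pq})$; but $\Q(\zeta_{pq})/F$ has degree prime to — no: instead I argue directly that a suitable power of $g(\chi)$, or $g(\chi)$ times its conjugates, already lies in $F$, so $\fP^{\theta'}$ is principal in $\cO_F$ itself. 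Hence $\theta'$ annihilates the class of every prime ideal, and since the prime ideals generate $\Cl(F)$, the element $\theta'=\theta$ annihilates $\Cl(F)$. Finally I note $\sigma_a^{-1}=\sigma_{a'}$ where $aa'\equiv1\bmod p$, so $\theta=\sum_{a=1}^{p-1}a\,\sigma_a^{-1}$ matches the stated form, completing the proof.
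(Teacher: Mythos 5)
The paper does not actually prove this theorem: it quotes it as a known result, with references to Ireland--Rosen and to \cite[Chap.~11]{Lem}, and only uses the consequence $c^\theta=c^{-1}$. Your sketch follows exactly the route taken in those references (Gauss sums, Stickelberger's congruence, descent to an ideal identity in $F$), so the architecture is the standard one, and the formal parts of your outline --- $g(\chi)g(\bar\chi)=\chi(-1)q^f$, the Galois action of $\sigma_t$ on Gauss sums, the reduction of $\Cl(F)$ to classes of prime ideals $\fP\nmid p$ --- are all fine.

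The genuine gap is the one you yourself flag, and it is compounded by a misstatement of the key formula. The claimed valuation $v_{\sigma_t\wp}(g(\chi))=f\langle t/p\rangle$ cannot be right: $\langle t/p\rangle=t/p$ with $p\nmid t$, and $f$ divides $p-1$, so $ft/p$ is never an integer, whereas a valuation must be. The correct local statement (Stickelberger's congruence) gives, for the prime $\wp$ of $\Q(\zeta_{pq})$ above $\fP$, a valuation of the shape
\[
v_\wp\bigl(g(\chi^{-t})\bigr)\;=\;(q-1)\sum_{i=0}^{f-1}\Bigl\langle \frac{q^i t}{p}\Bigr\rangle,
\]
the factor $q-1$ being the ramification index of $\wp$ over $\fP$ and the sum over $i$ accounting for the decomposition group $\langle\sigma_q\rangle$ of $\fP$; only after dividing by $q-1$ and collecting the $\sigma_q$-orbit does one obtain the clean exponent $t$ at $\sigma_t^{-1}\fP$ in the ideal $(g(\chi)^p)$ of $\cO_F$. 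Since every other step in the argument is formal, this congruence \emph{is} the content of the theorem, and asserting it with an incorrect normalization together with a promise to prove it ``via the congruence route'' leaves the proof incomplete. Two smaller points: the displayed identity $(g(\chi))=\fP^{\theta'}$ equates the ideal generated by an element of $\Q(\zeta_{pq})$ with an ideal of $\cO_F$ --- the statement you want is $(g(\chi)^p)=\fP^{\theta}$ as ideals of $\cO_F$, from which principality of $\fP^\theta$ is immediate; and the ``suitable power of $g(\chi)$ lying in $F$'' should be made precise as $g(\chi)^p\in F$, which holds because each $\tau_c\in\Gal(\Q(\zeta_{pq})/F)$ sends $g(\chi)$ to $\chi(c)^{-1}g(\chi)$.
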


From this theorem we find that 
$1 = c^\theta =  \prod \sigma_a^{-1}(c)^a = c^{p-1} = c^{-1}$,
hence $c = 1$ as claimed. In particular $\fa = (\alpha)$ is
principal. This shows that $\mu = \alpha^p\eta$ for some unit
$\eta$, hence $L = F(\sqrt[{}^p\,]{\eta}\,)$. Now write
$\eta = \zeta^t\eps$ for some unit $\eps$ in the maximal real
subfield of $F$. Since $\eps$ is fixed by complex conjugation 
$\sigma_{-1}$ and since $L/\Q$ is abelian, we see that
$\zeta^{-t}\eps = \sigma_{-1}(\mu) = \xi^p \mu^{-1}$, hence 
$\zeta^{-t}\eps = \xi^p \zeta^{-t} \eps^{-1}$. 
Thus $\eps$ is a $p$-th power, and we find $\mu = \zeta^t$.
But this implies that $L = \Q(\zeta_{p^2})$, and Prop. \ref{PExp}
is proved.

\medskip

Since every cyclotomic extension is ramified, we get the following
special case of Minkowski's theorem as a corollary:

\begin{cor}
Every solvable extension of $\Q$ is ramified.
\end{cor}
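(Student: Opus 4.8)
The plan is to bootstrap from the abelian case, which the Kronecker--Weber theorem (just established) already handles. First I would pass to the Galois closure: if $K/\Q$ is a nontrivial solvable extension with Galois closure $N$, then $G = \Gal(N/\Q)$ is a nontrivial solvable group, so its commutator subgroup satisfies $G' \subsetneq G$ (otherwise the derived series would never reach $1$) and the fixed field $M = N^{G'}$ is a nontrivial abelian extension of $\Q$. By Kronecker--Weber, $M \subseteq \Q(\zeta_n)$ for some $n$, and since $M \ne \Q$ we have $n \ge 3$.

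Next I would show that $M/\Q$ ramifies at some finite prime. It suffices to check that $\Q(\zeta_n)$ has no nontrivial subextension unramified at all finite primes. Using $\Gal(\Q(\zeta_n)/\Q) \simeq \prod_{p \mid n} (\Z/p^{v_p(n)}\Z)^\times$ together with the classical description of ramification in cyclotomic fields, the inertia subgroup at a prime $p \mid n$ is exactly the factor $(\Z/p^{v_p(n)}\Z)^\times$; these subgroups for varying $p \mid n$ generate the whole group, so the maximal everywhere-unramified subfield of $\Q(\zeta_n)$ is $\Q$. Hence some finite prime $p$ ramifies in $M/\Q$, and therefore also in $N/\Q$.

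Finally I would push ramification down from $N$ to $K$. Since $N$ is the compositum of the finitely many conjugate fields $\tau(K)$, each $\Q$-isomorphic to $K$ and hence sharing its set of ramified primes, and since a compositum of extensions unramified at $p$ is again unramified at $p$, the ramification of $N/\Q$ at $p$ forces $K/\Q$ to ramify at $p$ as well. The one genuinely delicate point is the middle step --- knowing that a nontrivial subfield of a cyclotomic field is ramified --- and this is where I expect the main (though entirely standard) work to lie; the passage to the Galois closure, the existence of a nontrivial abelian subextension of a nontrivial solvable Galois extension, and the descent of ramification along conjugates are all routine.
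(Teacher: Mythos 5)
Your proof is correct and takes essentially the same route the paper intends: the corollary is stated there as an immediate consequence of the fact that every nontrivial cyclotomic --- hence, by Kronecker--Weber, every nontrivial abelian --- extension of $\Q$ is ramified. Your write-up merely supplies the routine details (Galois closure, the maximal abelian subextension cut out by the commutator subgroup, the inertia computation in $\Q(\zeta_n)$, and the descent of ramification from the closure back to $K$), all of which are handled correctly.
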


\section*{Acknowledgement}
It is my pleasure to thank the unknown referee for the careful reading
of the manuscript.

\end{document}